\newtheorem{Theorem}{Theorem}[section]
\newtheorem{Lemma}{Lemma}[section]
\newtheorem{Proposition}{Proposition}[section]
\theoremstyle{definition}
\newtheorem{Definition}{Definition}[section]
\theoremstyle{remark}
\numberwithin{equation}{section}
\renewcommand{\u}{{\bf u}}
\newcommand{\R}{{\mathbb R}}
\newcommand{\Dv}{{\rm div}}
\def\w{w}
\def\f{\frac}
\renewcommand{\O}{\Omega}
\def\D{\Delta }
\def\hf1{^\f{1}{1-\xi^2}}
\def\be{\begin{equation}}
\def\en{\end{equation}}
\def\bs{\begin{split}}
\def\es{\end{split}}
\renewcommand{\d}{{\bf d}}
\renewcommand{\v}{{\bf v}}
\author{Cheng Yu}
\address{Department of Mathematics, University of Pittsburgh,
                           Pittsburgh, PA 15260.}
\email{chy39@pitt.edu}
\title[Global well-posedness for 2D Navier-Stokes-Vlasov Equations]
{Global well-posedness for the two dimensional Navier-Stokes-Vlasov Equations}
\keywords{Global well-posedness, Navier-Stokes equations, Vlasov equations}
\subjclass[2000]{75D05, 35J05, 76T20.}
\date{\today}
\begin{document}
\begin{abstract}
The global well-posedness for the incompressible Navier-Stokes-Vlasov equations in two spatial dimensions is established by a priori estimates, the characteristic method and the semigroup analysis.
\end{abstract}

\maketitle

\section{Introduction}
The objective of this paper is to establish the global well-posedness for the two dimensional Navier-Stokes-Vlasov equations:
\begin{equation}
\label{NSV}
\begin{split}
&\partial_t\u+\u\cdot\nabla\u+\nabla P-\mu\D\u=-\int_{\R^2}(\u-\v)f\,d\v,
\\&\Dv\u=0,
\\&\partial_t f+\v\cdot\nabla_{x}f+\Dv_{\v}((\u-\v)f)=0
\end{split}
\end{equation}
in $(0,T)\times\R^2\times\R^2$, with the following initial data
\begin{equation}
\label{initialdata}
\u(x,0)=\u_0(x),\;\;\;\;f(x,\v,0)=f_0(x,\v),
\end{equation}
where $\u$ is the velocity of the fluid, $P$ is the pressure, $\mu$ is the kinematic viscosity of the fluid.
Without loss of generality,
we take $\mu=1$ throughout the paper. The distribution function $f(t,x,\v)$
depends on the time $t\in[0,T]$, the physical position $x\in\R^2$ and the velocity of particle $\v\in\R^2$. The number of particles enclosed at $t\geq 0$ and location $x\in\R^2$ in the volume element $d\v$ is given by $f(t,x,\v)\,d\v$. We refer the readers to \cite{CP,H,O,RM,W} for more physical
background and discussion of the Navier-Stokes-Vlasov equations and related problems.

There have been many mathematical studies on the Navier-Stokes-Vlasov equations and related problems. The global existence for the Stokes-Vlasov system in a bounded domain was established in \cite{H}. The existence theorem for weak solutions has been extended in \cite{BDGM}, where the author did not neglect the convection term and considered the Navier-Stokes-Vlasov equations within a periodic domain.
The weak solution of the Navier-Stokes-Vlasov-Poisson system with corresponding boundary value problem was obtained in \cite{AKS}. The global existence of smooth solutions with small data for the Navier-Stokes-Vlasov-Fokker-Planck equations was obtained in \cite{GHMZ}.
More recently, the existence of global weak solutions with large data to the Navier-Stokes-Vlasov equations in a bounded domain was established in \cite{YU}.

However, there is no existence theory available for the Navier-Stokes-Vlasov equations with initial data in the whole space.
Compared with \cite{BDGM,YU}, the new difficulty is the loss of compactness of $\int_{\R^2}f\,d\v$ and $\int_{\R^2}\v f\,d\v$ in the whole space. Since the methods in \cite{BDGM,YU} do not work here, mathematical analysis for this problem is challenging and requires new ideas and techniques.
In this paper we shall study the initial value problem \eqref{NSV}-\eqref{initialdata} and establish the global well-posedness with large initial data. To achieve our goal, we will derive a priori estimates and use fixed point arguments. Partially motivated by the work of \cite{CK}, we will use the semigroup analysis to establish the iteration of $(\u,f)$ for using the fixed point theorem. To overcome the difficulty of the estimates of distribution function $f$, we adopt the idea as in \cite{BDGM}, and apply the characteristic method to the Vlasov equation, then the existence and uniqueness of the solutions for the Vlasov equation follows when $\u$ is continuous with respect to time $t$. The continuous dependence of the solution $f$ on $\u$ is also established. We also use Lemma \ref{L1} to deal with the distribution function and its coupling and interaction with fluid variables. With the above a priori estimates, continuous dependence, and semigroup analysis, we can apply the fixed point theorem to obtain the existence and uniqueness of strong solutions to problem \eqref{NSV}-\eqref{initialdata}. Further regularity of $(\u,f)$ can be deduced from the strong solution, thus the global well-posedness can be established.

In what follows, we denote $$m_kf=\int_{\R^2}|\v|^kf\, d\v,\;\;\text{ and } \;\;M_kf=\int_{\R^2}\int_{\R^2}|\v|^kf\,d\v dx,$$
$$\rho=\int_{\R^2}f\,d\v,\;\;\;\; j=\int_{\R^2}\v f\,d\v.$$
 It is easy to see that
\begin{equation*}
M_kf=\int_{\R^2}m_kf\,dx.
\end{equation*}
Here we state the following lemma due to \cite{H}:
\begin{Lemma}
\label{L1}
Suppose that $(\u,f)$ is a smooth solution to
\eqref{NSV}-\eqref{initialdata}. If $f_0\in L^{p}$ for some $p>1$, we have
\begin{equation*}
\|f(t,x;\v)\|_{L^p}\leq C(T)\|f_0\|_{L^{p}}, \text{ for any } t\geq0;
\end{equation*}
and if $|\v|^kf_0\in L^{1}(\R^2\times\R^2),$ then we have
\begin{equation*}
\int_{\R^2\times\R^2}|\v|^kf\,d\v dx\leq C(T)\left(\left(\int_{\R^2\times\R^2}|\v|^kf_0\,d\v dx\right)^{\frac{1}{2+k}}+(||f_0||_{L^{\infty}}+1)\|\u\|_{L^{r}(0,T;L^{2+k})}\right)^{2+k}.
\end{equation*}
\end{Lemma}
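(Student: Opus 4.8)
The plan is to run energy-type estimates directly on the Vlasov equation in its conservative form. Since $\v$ is independent of $x$ and $\u$ is independent of $\v$, the third equation of \eqref{NSV} reads
\[\partial_t f + \Dv_x(\v f) + \Dv_\v((\u - \v)f) = 0,\]
and in two velocity dimensions $\Dv_\v(\u - \v) = -2$. For the first claim I multiply by $pf^{p-1}$, using that nonnegativity of $f$ is propagated along characteristics. The $x$-flux becomes $\Dv_x(\v f^p)$ and integrates to zero, while the $\v$-flux rearranges to $\Dv_\v((\u-\v)f^p) + (2-2p)f^p$; integrating over $\R^2\times\R^2$ kills both divergences and leaves the linear ODE
\[\f{d}{dt}\int_{\R^2}\int_{\R^2} f^p\,d\v dx = 2(p-1)\int_{\R^2}\int_{\R^2} f^p\,d\v dx.\]
Hence $\|f(t)\|_{L^p} = e^{2(p-1)t/p}\|f_0\|_{L^p}$, which is the first bound with $C(T) = e^{2(p-1)T/p}$. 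Letting $p\to\infty$ (equivalently integrating $\f{d}{dt}f = 2f$ along characteristics) gives the companion estimate $\|f(t)\|_{L^\infty}\leq e^{2t}\|f_0\|_{L^\infty}$, which I will use below.

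For the moment bound I test the same equation against $|\v|^k$ and integrate by parts. The $x$-flux vanishes, and since $\nabla_\v|\v|^k = k|\v|^{k-2}\v$ the $\v$-flux yields
\[\f{d}{dt}M_kf = k\int_{\R^2}\int_{\R^2}|\v|^{k-2}(\v\cd\u)f\,d\v dx - kM_kf.\]
Discarding the favorable dissipative term $-kM_kf$ and using $|\v\cd\u|\leq|\v||\u|$ gives
\[\f{d}{dt}M_kf \leq k\int_{\R^2}|\u|\,m_{k-1}f\,dx.\]
I estimate the right-hand side by Hölder in $x$ with the conjugate pair $(2+k,\ \f{2+k}{1+k})$, so that $\|\u\|_{L^{2+k}}$ appears and I am left to control $\|m_{k-1}f\|_{L^{(2+k)/(1+k)}}$.

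The tool for this is the kinetic interpolation inequality, obtained by splitting $\int|\v|^{k-1}f\,d\v$ at a radius $R$ and optimizing: for $f\geq0$,
\[m_{k-1}f \leq C\|f\|_{L^\infty_\v}^{1/(k+2)}(m_kf)^{(k+1)/(k+2)}.\]
Raising this to the power $q=\f{2+k}{1+k}$ and integrating in $x$, the exponents collapse exactly (one checks $q(k+1)/(k+2)=1$), and bounding $\|f\|_{L^\infty_\v}$ by the global $\|f\|_{L^\infty}\leq e^{2t}\|f_0\|_{L^\infty}$ from the first part gives
\[\int_{\R^2}|\u|\,m_{k-1}f\,dx \leq C\|\u\|_{L^{2+k}}\|f\|_{L^\infty}^{1/(2+k)}(M_kf)^{(1+k)/(2+k)}.\]
With $y(t)=M_kf(t)$ this is a Bernoulli inequality $y'\leq A(t)y^{(1+k)/(2+k)}$, and since the exponent is below one, $\f{d}{dt}y^{1/(2+k)}\leq\f{1}{2+k}A(t)$ integrates directly. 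Estimating $\int_0^t A$ by Hölder in time to produce $\|\u\|_{L^r(0,T;L^{2+k})}$, and using $\|f_0\|_{L^\infty}^{1/(2+k)}\leq\|f_0\|_{L^\infty}+1$, yields the stated inequality after raising to the power $2+k$.

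I expect the main obstacle to be the coupling term $k\int_{\R^2}\int_{\R^2}|\v|^{k-2}(\v\cd\u)f\,d\v dx$: unlike the $L^p$ estimate it does not close by itself and genuinely requires the interpolation inequality to trade one power of $|\v|$ for the $L^\infty$ control of $f$, together with the precise Hölder exponents so that the $x$-integral reproduces $M_kf$ to the first power. Arranging these exponents to match — and thereby landing on a Bernoulli ODE with subcritical exponent $(1+k)/(2+k)$ that integrates with no smallness assumption — is the delicate bookkeeping on which the estimate rests.
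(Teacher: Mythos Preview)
Your argument is correct. The $L^p$ estimate follows exactly as you wrote from the identity $\partial_t f^p + \Dv_x(\v f^p) + \Dv_\v((\u-\v)f^p) = 2(p-1)f^p$, and for the moment bound the kinetic interpolation $m_{k-1}f \leq C\|f\|_{L^\infty_\v}^{1/(k+2)}(m_kf)^{(k+1)/(k+2)}$ combined with H\"older in $x$ with exponents $(2+k,\,(2+k)/(1+k))$ does close the estimate as a Bernoulli inequality; the exponent bookkeeping you flagged is the only subtle point and you have it right.

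As for comparison: the paper does not actually prove this lemma. It is stated with the attribution ``due to \cite{H}'' (Hamdache, 1998) and used as a black box. Your proof is the standard one for velocity-moment propagation in Vlasov-type equations and is essentially what one finds in that reference, so there is no methodological divergence to discuss---you have simply supplied the argument the paper omits.
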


Our first main result reads as follows.
\begin{Theorem}
\label{T}
If $\u_0\in W^{1,2}(\R^2)$ is a divergence-free vector, $f_0\in C^{1}(\R^2\times\R^2)$, $M_6f_0<\infty$,  then there exists a unique strong solution $(\u,f)$ to \eqref{NSV}-\eqref{initialdata} for any $T>0$.
\end{Theorem}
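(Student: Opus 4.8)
The plan is to recast \eqref{NSV} as a fixed-point problem for $\u$ alone and solve it by the contraction mapping principle on a short time interval, then propagate the solution globally by a priori estimates. Writing the drag term as $\int_{\R^2}(\u-\v)f\,d\v=\r\u-j$, the momentum equation becomes a forced Navier--Stokes system $\partial_t\u+\u\cd\na\u+\na P-\D\u=j-\r\u$, $\Dv\u=0$, to which I would apply the Leray projection $\mathbb{P}$ and the Stokes semigroup $e^{-tA}$ to obtain the mild formulation
\be
\u(t)=e^{-tA}\u_0+\int_0^t e^{-(t-s)A}\mathbb{P}\big(j-\r\u-\u\cd\na\u\big)(s)\,ds.
\en
I would work in the Banach space $X_T=C([0,T];W^{1,2}(\R^2))$, with the smoothing in $L^2(0,T;W^{2,2})$ controlled on the side, and define a map $\Phi$ that sends $\u\in X_T$ to the right-hand side above, where $\r=\r[\u]$ and $j=j[\u]$ are built from the Vlasov solution driven by $\u$.

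Second, for a fixed $\u\in X_T$ that is Lipschitz in $x$, I would solve the Vlasov equation by the method of characteristics. The characteristic system is $\dot X=V$, $\dot V=\u(t,X)-V$, and since $\Dv_\v(\u-\v)=-2$ the equation reads $\partial_t f+\v\cd\na_x f+(\u-\v)\cd\na_\v f=2f$, so $f(t,X(t),V(t))=f_0(X(0),V(0))e^{2t}$ along trajectories. This yields existence, uniqueness, nonnegativity, and the $L^p$ and $L^\infty$ bounds; together with the moment control of Lemma \ref{L1} it gives bounds on $\r$ and $j$. The crucial interpolation is that in two velocity dimensions $\r=\int f\,d\v\le C\|f\|_{L^\infty}^{k/(k+2)}(m_kf)^{2/(k+2)}$, and similarly for $j$, so the hypothesis $M_6f_0<\infty$ furnishes enough moments to place the forcing $j-\r\u$ in the spaces required by the Stokes semigroup. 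At this stage I would also establish the continuous dependence of $f$, and hence of $\r$ and $j$, on $\u$: comparing the flows generated by $\u_1,\u_2$ via Gr\"onwall gives $\|f_1-f_2\|+\|\r_1-\r_2\|+\|j_1-j_2\|\le C\|\u_1-\u_2\|_{X_T}$ with a constant that is small for small $T$.

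Third, combining the semigroup smoothing estimates for $e^{-tA}\mathbb{P}$ with the bounds and Lipschitz estimates of the previous step, I would show that for $T$ small $\Phi$ maps a closed ball of $X_T$ into itself and is a contraction; the Navier--Stokes convection term $\u\cd\na\u$ is handled by the standard bilinear estimate in two dimensions, and the coupling terms $\r\u-j$ by the moment bounds. The Banach fixed-point theorem then yields a unique local mild solution $(\u,f)$. To pass from local to global I would close the a priori estimates: the basic energy balance for the coupled system (testing the momentum equation with $\u$ and using the Vlasov structure) controls $\|\u\|_{L^\infty(0,T;L^2)}+\|\na\u\|_{L^2(0,T;L^2)}$, while Lemma \ref{L1} controls the moments $M_kf$ in terms of $\|\u\|_{L^{r}(0,T;L^{2+k})}$; since all these bounds are finite, depending only on $T$ and the data, the local solution cannot blow up and extends to $[0,T]$ for every $T>0$. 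A bootstrap using parabolic regularity then upgrades $\u$ to a strong solution and, via the explicit characteristic representation, propagates the $C^1$ regularity of $f$.

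I expect the main obstacle to be the second step: because $\r$ and $j$ lose compactness in the whole space, all of their control must come quantitatively from the $L^\infty$--moment interpolation above, and it is this interpolation, together with the Gr\"onwall comparison of characteristic flows, that must be made robust enough to both close the fixed-point estimates and survive the passage to global time.
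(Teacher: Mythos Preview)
Your proposal is correct and follows essentially the same route as the paper: a mild/semigroup formulation for the velocity, the characteristics representation $f(t,x,\v)=e^{2t}f_0(\chi(t,x,\v))$ for the Vlasov part, moment control via Lemma~\ref{L1} together with the $L^\infty$--moment interpolation to bound $\rho$ and $j$, a Gr\"onwall comparison of flows for the continuous dependence, a contraction/iteration argument for local existence, and the a~priori estimates (the paper makes the bootstrap explicit through the vorticity equation) to pass to global time. The only cosmetic difference is that the paper sets up the iteration on the pair $(\u,f)$ in $X\times Y$ whereas you formulate the fixed point on $\u$ alone with $\rho[\u],\,j[\u]$ as functionals; since $\bar f=N(\u,\v)$ is entirely determined by $\u$, these are equivalent.
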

The strong solution to system \eqref{NSV}-\eqref{initialdata} is defined as follows:
\begin{Definition}
A pair $(\u,f)$ is called a strong solution to the system \eqref{NSV}-\eqref{initialdata} if
\begin{itemize}
\item  $\u\in C(0,T;W^{1,2}(\R^2))\cap L^{2}(0,T;W^{2,2}(\R^2))$;
\item $f(t,x,\v)\geq 0, \text{ for any } (t,x,\v)\in (0,T)\times\R^2\times\R^2$;
\item $f\in C^{1}(0,T;W^{1,2}(\R^2\times\R^2))$;
\item $f|\v|^2\in L^{\infty}(0,T;L^{1}(\R^2\times\R^2))$.

 \end{itemize}
\end{Definition}
Based on Theorem \ref{T}, we can differentiate the system and apply similar arguments to obtain the further result:
\begin{Theorem}
\label{T2}
If $\u_0\in W^{m+1,2}(\R^2)\,\, m>0 \text{ integer},$ is a divergence-free vector, $f_0\in C^{1}(\R^2\times\R^2)$, $M_6f_0<\infty$,  then the solution satisfies
\begin{equation*}
\begin{split}
&\u\in C(0,T;W^{m+1,2}(\R^2))\cap L^2(0,T;W^{m+2,2}(\R\times\R^2));
\\&f(t,x,\v)\geq 0, \text{ for any } (t,x,\v)\in (0,T)\times\R^2\times\R^2;
\\&f\in C^{1}(0,T;W^{m+1,2}(\R^2\times\R^2)).
\end{split}
\end{equation*}
\end{Theorem}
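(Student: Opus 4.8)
The plan is to argue by induction on $m$, taking Theorem \ref{T} as the base case $m=0$ and, at each step, differentiating the system and running energy estimates of the same type one order higher. Suppose the conclusion holds with $m$ replaced by $m-1$, so that $\u\in C(0,T;W^{m,2})\cap L^2(0,T;W^{m+1,2})$ and $f\in C^1(0,T;W^{m,2})$ are already available. For a multi-index $\alpha$ with $|\alpha|=m+1$ I would apply $D^\alpha$ to the momentum equation, test against $D^\alpha\u$, and integrate over $\R^2$. The viscous term produces the good dissipation $\|\na D^\alpha\u\|_{L^2}^2$, while the convective term, after the Leibniz rule, splits into a top-order piece $\int (\u\cd\na)D^\alpha\u\cd D^\alpha\u\,dx$ that vanishes by the divergence-free condition and commutator pieces controlled by the two-dimensional Ladyzhenskaya and Gagliardo--Nirenberg inequalities together with the induction hypothesis.

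The genuinely new terms come from the drag force, which equals $-(\rho\u-j)$ since $\u$ is independent of $\v$. Applying $D^\alpha$ and expanding by Leibniz, the dangerous contributions are $D^\alpha\rho$ and $D^\alpha j$, that is, the top-order derivatives of the moments. To bound these I would differentiate the Vlasov equation, deriving a transport equation for $D^\beta f$ (with $|\beta|=m+1$, in the $x$ and $\v$ variables) whose forcing depends on $\u$ only through derivatives of order at most $m+1$, hence controllable by the fluid regularity just being established. Taking weighted $\v$-moments of this equation and integrating by parts in $\v$ — which lowers the power of $|\v|$ and so keeps the moment order bounded by six — yields, via the moment-interpolation mechanism underlying Lemma \ref{L1}, control of $\|D^\alpha\rho\|_{L^2_x}$ and $\|D^\alpha j\|_{L^2_x}$ in terms of the weighted moments and $L^p$ norms of $D^\beta f$. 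This is the place where the absence of compactness in the whole space forces one to work entirely through quantitative weighted estimates rather than through a compactness argument.

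With these moment bounds in hand I would close a single Gr\"onwall inequality for the quantity $\|\u\|_{W^{m+1,2}}^2+\|f\|_{W^{m+1,2}}^2$ plus the relevant weighted moments, the dissipation $\|\u\|_{W^{m+2,2}}^2$ being absorbed on the right as in Theorem \ref{T}. Integrating gives $\u\in L^\infty(0,T;W^{m+1,2})\cap L^2(0,T;W^{m+2,2})$ and the corresponding bound for $f$; the time continuity $\u\in C(0,T;W^{m+1,2})$ and $f\in C^1(0,T;W^{m+1,2})$ then follows from the equations by a standard argument upgrading weak to strong continuity, while nonnegativity of $f$ is already guaranteed by Theorem \ref{T} and is preserved along the characteristics.

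The hard part will be the moment estimates for the top-order derivatives $D^\alpha\rho$ and $D^\alpha j$: one must verify that differentiating the Vlasov equation does not raise the required moment order beyond six, and that the resulting interpolation closes self-consistently at level $m+1$ without appealing to compactness. Everything else is a bookkeeping extension of the a priori estimates already performed for Theorem \ref{T}.
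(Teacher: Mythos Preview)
Your proposal is exactly the argument the paper intends: its own proof of Theorem~\ref{T2} is a two-line sketch saying one differentiates \eqref{NSV} and reruns the estimates behind Theorem~\ref{T}, so your inductive scheme with energy estimates on $D^\alpha\u$ and moment control of $D^\alpha\rho$, $D^\alpha j$ already supplies more detail than the paper does. The moment-order concern you single out as the hard part is not addressed in the paper either; apart from that, the approaches coincide.
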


\bigskip

\section{a Priori Estimates}

The aim of this section is to obtain some a priori estimates. We start with deriving the energy inequality.
Multiplying by $\u$ the both sides of the first equation in \eqref{NSV}, integrating over $\R^2$ and by parts,
we have
\begin{equation}
\label{energyofns}
\frac{d}{dt}\int_{\R^2}\frac{1}{2}|\u|^2\,dx+\int_{\R^2}|\nabla\u|^2\,dx=-\int_{\R^2}\int_{\R^2}f(\u-\v)\u\, d\v\, dx.
\end{equation}
Multiplying by $(1+\frac{1}{2}|\v|^2)$ the both sides of the third equation in \eqref{NSV} and integrating over $\R^2$ and by parts, one obtains that
\begin{equation}
\begin{split}
\label{energyofvlasov}
&\frac{d}{dt}\int_{\R^2\times\R^2}f(1+|\v|^2)\,d\v dx+\int_{\R^2}\int_{\R^2}f|\u-\v|^2d\v \,dx
\\&=\int_{\R^2}\int_{\R^2}f(\u-\v)\u\, d\v\, dx.
\end{split}
\end{equation}
Using \eqref{energyofns}-\eqref{energyofvlasov}, one obtains
\begin{equation}
\label{energyofnsv}
\begin{split}
&\frac{d}{dt}\left(\int_{\R^2}|\u|^2\,dx+\int_{\R^2\times\R^2}f(1+|\v|^2)\,\d\v dx\right)+2\int_{\R^2}|\nabla\u|^2dx\\&+2\int_{\R^2\times\R^2}f|\u-\v|^2\,d\v dx = 0.
\end{split}
\end{equation}
Taking the curl of the first equation in \eqref{NSV}, we obtain
\begin{equation}
\label{curlequality}
\partial_t\omega+\u\cdot\nabla\omega-\Delta\omega=\nabla^{T}\cdot(-\u\rho+j),
\end{equation}
where $\omega=\text{curl}\u$.
Multiplying by $\omega$ the both sides of \eqref{curlequality} and integrating, we obtain, after integration by parts,
\begin{equation}
\label{curlenergy}
\frac{d}{dt}\|\omega\|_{L^2}^2+\|\nabla\omega\|_{L^2}^2\leq C\left(\|\u\|_{L^4}^4+\|\rho\|_{L^4}^4+\|j\|_{L^2}^2\right).
\end{equation}

On the other hand, by \eqref{energyofnsv}, we have
$$\u\in L^{2}(0,T;W^{1,2}(\R^2)),$$
which implies that
\begin{equation}
\label{Lpregularityofu}
\u \in L^2(0,T;L^p(\R^2)),\;\;\text{ for any }p\geq 1.
\end{equation}
Applying Lemma \ref{L1}, the fact $M_6f_0<\infty,$ and \eqref{Lpregularityofu}, we obtain
 $$M_6f<\infty.$$
Applying Lemma 1 as in \cite{BDGM} in the two-dimensional space, we can control $\|\rho\|_{L^4}$ and $\|j\|_{L^2}$
by $M_6f$. Thus, we obtain that
\begin{equation}
\sup_{0\leq t\leq T}\|\omega\|_{L^2}^2+\int_0^T\|\nabla\omega\|_{L^2}^2dt\leq C(T),
\end{equation}
which implies that
\begin{equation*}
\u \in L^{\infty}(0,T;W^{1,2}(\R^2)) \cap L^2(0,T;W^{2,2}(\R^2)).
\end{equation*}
Multiplying by $\u_t$ the both sides of the first equation in \eqref{NSV}, using integration by parts, we obtain
\begin{equation}
\label{estimate of velocity on time}
\begin{split}
&\frac{\partial}{\partial t}\int_{\R^2}|\nabla\u|^2\,dx+\int_{\R^2}|\u_t|^2\,dx
\\&\leq C\left(\|\u\|_{L^4}^4+\|\rho\|_{L^4}^4+\|j\|_{L^2}^2\right)+\int_{\R^2}|\nabla\u|^2\,dx.
\end{split}
\end{equation}
Applying Gronwall's inequality, one obtains that
\begin{equation*}
\u_t\in L^{2}(0,T;L^2(\R^2)),\;\;\;\text{ and }\;\;\u\in L^{\infty}(0,T;W^{1,2}(\R^2)).
\end{equation*}
Now, we rely on the following Lemma which is a very special case of interpolation theorem of Lions-Magenes. We refer the readers to \cite{T} for the proof of this lemma.
\begin{Lemma}
\label{L5.1}
Let $V\subset H\subset V'$ be three Hilbert spaces, $V'$ is a dual space of $V$. If a function $\u$ belong to $L^{2}(0,T;V)$ and its derivative $\u'$ belongs to $L^2(0,T;V')$ then $\u$ is almost everywhere equal to a function continuous from $[0,T]$ into $H$.
\end{Lemma}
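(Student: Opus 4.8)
The statement is the classical Lions--Magenes interpolation lemma, and my plan is to prove it by mollification in time combined with the energy identity for smooth $V$-valued functions. First I would fix the functional-analytic framework: identifying $H$ with its own dual through the Riesz map gives the Gelfand triple $V\hookrightarrow H\cong H'\hookrightarrow V'$ with dense continuous embeddings, and the duality pairing $\langle\cdot,\cdot\rangle$ between $V'$ and $V$ then extends the inner product of $H$, in the sense that $\langle h,v\rangle=(h,v)_H$ whenever $h\in H$ and $v\in V$. Since $V\hookrightarrow V'$, the hypotheses give $\u\in L^2(0,T;V')$ together with $\u'\in L^2(0,T;V')$, so $\u\in H^1(0,T;V')\hookrightarrow C([0,T];V')$; thus $\u$ already admits a representative continuous into the weaker space $V'$, and the whole content of the lemma is to upgrade this to continuity into $H$.

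The core tool is the energy identity: if $w\in C^1([0,T];V)$ then $t\mapsto|w(t)|_H^2$ is continuously differentiable and
\[
|w(t)|_H^2-|w(s)|_H^2=2\int_s^t\langle w'(\tau),w(\tau)\rangle\,d\tau,
\]
which follows by differentiating $(w(t),w(t))_H$ and using that the pairing extends the $H$-inner product. To exploit it I would extend $\u$ from $(0,T)$ to a slightly larger interval so that the extension still lies in $L^2(V)$ with derivative in $L^2(V')$, and then mollify in time, setting $\u_n=J_n*\u$ for a standard mollifying sequence $J_n$. This yields smooth $V$-valued functions with $\u_n\to\u$ in $L^2(0,T;V)$ and $\u_n'\to\u'$ in $L^2(0,T;V')$.

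Applying the energy identity to the differences $w=\u_n-\u_m$ and integrating the resulting relation in the variable $s$ over $(0,T)$ produces, after using the continuous embedding $V\hookrightarrow H$ to bound the $L^2(H)$-norm by the $L^2(V)$-norm, an estimate of the form
\[
\sup_{t\in[0,T]}|\u_n(t)-\u_m(t)|_H^2\leq C\Big(\|\u_n-\u_m\|_{L^2(0,T;V)}^2+\|\u_n'-\u_m'\|_{L^2(0,T;V')}\,\|\u_n-\u_m\|_{L^2(0,T;V)}\Big).
\]
The right-hand side tends to $0$ as $n,m\to\infty$, so $\{\u_n\}$ is Cauchy in $C([0,T];H)$ and converges to some $\tilde\u\in C([0,T];H)$; since $\u_n\to\u$ in $L^2(0,T;H)$ as well, we conclude $\tilde\u=\u$ almost everywhere, which is precisely the assertion.

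The step I expect to be the main obstacle is the mollification near the endpoints $t=0$ and $t=T$: one must extend $\u$ beyond $(0,T)$ in a way that simultaneously preserves membership in $L^2(V)$ and keeps the distributional time derivative in $L^2(V')$, so that the convolution $J_n*\u$ is well defined and converges in both norms up to and including the endpoints. This is exactly where the $C([0,T];V')$-continuity established at the outset is used, since it renders the boundary values $\u(0),\u(T)\in V'$ meaningful and permits a reflection or cutoff extension. The remaining points — that the pairing extends the inner product and that one may pass to the limit in the energy identity — are then routine density arguments.
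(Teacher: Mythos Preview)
Your argument is correct and is essentially the standard proof of the Lions--Magenes lemma via time-mollification and the energy identity. Note, however, that the paper does not actually prove this statement: it merely cites Temam's book \cite{T} for the proof and uses the lemma as a black box. The approach you outline is precisely the one given in that reference, so in effect you have supplied the proof the paper omits rather than reproduced or diverged from anything the paper itself does.
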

Applying Lemma \ref{L5.1} with the following facts
\begin{equation*}
\frac{\partial \u}{\partial t} \in L^{2}(\O\times(0,T)),\text{ and } \u \in L^{\infty}(0,T;W^{1,2}(\R^2)) \cap L^2(0,T;W^{2,2}(\R^2)),
\end{equation*}
we conclude that $\u\in C(0,T;W^{1,2}(\R^2))$.\\\\
On the other hand, we can apply maximal principle to the Vlasov equation to obtain
$$\|f\|_{L^{\infty}(0,T;L^{\infty}(\R^2\times\R^2)\cap L^{1}(\R^2\times\R^2))}\leq \|f_0\|_{L^{\infty}(\R^2\times\R^2)\cap L^{1}(\R^2\times\R^2))}.$$
Thus we proved
\begin{Proposition}
\label{P1}
Let $(\u,f)$ be a solution of \eqref{NSV}-\eqref{initialdata} on $[0,T],$ with $\u_0\in W^{2,2}(\R^2)$ and $f_0\in L^{\infty}(\R^2\times\R^2)\cap L^{1}(\R^2\times\R^2))$, $M_6f_0\leq C<\infty,$ then we have the following regularity:
\begin{equation*}
\begin{split}
&\u\in C(0,T;W^{1,2}(\R^2))\cap L^2(0,T;W^{2,2}(\R^2));
\\&f\in L^{\infty}(0,T;L^{\infty}(\R^2\times\R^2)\cap L^{1}(\R^2\times\R^2)).
\end{split}
\end{equation*}
\end{Proposition}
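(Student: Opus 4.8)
The proposition simply packages the bounds derived above into the stated regularity class, so the plan is to assemble them in four layers and verify at each stage that every right-hand side has already been controlled. First I would record the basic energy balance: multiplying the momentum equation by $\u$ and the Vlasov equation by $1+\f{1}{2}|\v|^2$ and adding, the drag cross term $\int\int f(\u-\v)\u\,d\v\,dx$ cancels, producing \eqref{energyofnsv}. Integrating in time yields $\u\in L^\infty(0,T;L^2)\cap L^2(0,T;W^{1,2})$ together with $f(1+|\v|^2)\in L^\infty(0,T;L^1)$.

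Next I would upgrade the integrability of the macroscopic quantities $\r$ and $j$. From $\u\in L^2(0,T;W^{1,2}(\R^2))$ and the two-dimensional Sobolev embedding $W^{1,2}\hookrightarrow L^p$ for every finite $p$, one obtains \eqref{Lpregularityofu}. Feeding this into Lemma \ref{L1} with $k=6$ (so that only $\|\u\|_{L^r(0,T;L^{8})}$, already finite, is needed) bounds $M_6f$. Then the moment interpolation of \cite{BDGM} in two dimensions gives the pointwise bound $\r\le C\|f\|_{L^\infty}^{3/4}(m_6f)^{1/4}$, whence $\|\r\|_{L^4}^4\le C\|f\|_{L^\infty}^3M_6f$, together with the companion estimate for $j$; combined with the $L^1_x$ control of $\int|\v|f\,d\v$ inherited from the energy bound, this places $j$ in $L^2$. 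The exponent $k=6$ is exactly what puts $\r$ in $L^4$, which is why $M_6f_0<\infty$ is assumed.

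With these in hand I would close the velocity estimates. Taking the curl gives \eqref{curlequality} with forcing $\na^{T}\cd(-\u\r+j)$; the vorticity energy estimate \eqref{curlenergy} then has a right-hand side that is integrable in time by the previous step and \eqref{Lpregularityofu}, so Gronwall yields $\sup_{[0,T]}\|\omega\|_{L^2}^2+\int_0^T\|\na\omega\|_{L^2}^2\,dt\le C(T)$. Divergence-free elliptic (Biot--Savart) regularity converts this into $\u\in L^\infty(0,T;W^{1,2})\cap L^2(0,T;W^{2,2})$. Testing the momentum equation with $\u_t$ produces the differential inequality for $\|\na\u\|_{L^2}^2$ displayed above, and Gronwall gives $\u_t\in L^2(0,T;L^2)$. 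Finally I would apply the Lions--Magenes interpolation Lemma \ref{L5.1} with the triple $W^{2,2}\subset W^{1,2}\subset L^2$ (pivot $W^{1,2}$), using $\u\in L^2(0,T;W^{2,2})$ and $\u_t\in L^2(0,T;L^2)$, to upgrade to $\u\in C(0,T;W^{1,2})$.

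For $f$ I would integrate along the characteristics of $\del_t+\v\cd\na_x+(\u-\v)\cd\na_\v$: the equation is a conservation law in phase space, so $\|f(t)\|_{L^1}$ is conserved, while transporting $f$ along characteristics (equivalently the maximum principle) controls $\|f(t)\|_{L^\infty}$ on $[0,T]$; both are consistent with Lemma \ref{L1}. The main obstacle is not any single estimate but the self-consistency of the chain in the whole space: because $\r$ and $j$ enjoy no compactness on $\R^2$, every norm appearing on a right-hand side must already have been bounded by the time it is used. Threading the exponents so that Lemma \ref{L1} controls $M_6f$ using only the energy-level bound on $\u$, and so that \eqref{curlenergy} uses only $\|\r\|_{L^4}$ and $\|j\|_{L^2}$, which are in turn controlled by $M_6f$, is the delicate point; this is precisely where the value $k=6$ and the hypothesis $M_6f_0<\infty$ are forced.
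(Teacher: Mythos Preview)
Your proposal is correct and follows essentially the same route as the paper: energy identity \eqref{energyofnsv} for the basic bounds, Lemma~\ref{L1} with $k=6$ plus the moment interpolation from \cite{BDGM} to control $\|\rho\|_{L^4}$ and $\|j\|_{L^2}$, the vorticity estimate \eqref{curlenergy} and the $\u_t$ test to reach $\u\in L^\infty_tW^{1,2}\cap L^2_tW^{2,2}$ with $\u_t\in L^2_tL^2$, Lions--Magenes (Lemma~\ref{L5.1}) for the time continuity, and the maximum principle/$L^1$ conservation for $f$. Your explicit identification of the exponent bookkeeping (why $k=6$ forces $\rho\in L^4$) and of the Hilbert triple in Lemma~\ref{L5.1} only makes the paper's argument more transparent.
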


\section{Local and Global Strong Solution}

\begin{Proposition}
\label{P2}
Let $\u_0\in W^{2,2}(\R^2)$ be a divergence-free vector, $f_0\in L^{\infty}(\R^2\times\R^2)\cap L^{1}(\R^2\times\R^2))$, $M_6f_0\leq C<\infty,$ then there exists a time $T_0>0$ depending on the initial data and a unique strong solution
\begin{equation*}
\begin{split}
&\u \in C(0,T_0,\mathbb{P}W^{1,2}(\R^2))\cap L^{2}(0,T_0,\mathbb{P}W^{2,2}(\R^2));
\\& f\in L^{\infty}(0,T_0, L^{\infty}(\R^2\times\R^2)\cap L^{1}(\R^2\times\R^2))
\end{split}
\end{equation*}
of \eqref{NSV} with the initial data $(\u_0,f_0),$ where $\mathbb{P}$ is the Leray-Hodge projector on divergence-free vector. In addition, if $f_0\in C^{1}(\R^2\times\R^2)$, then we have $f\in C^{1}(0,T_0;W^{1,2}(\R^2\times\R^2)).$
\end{Proposition}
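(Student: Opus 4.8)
The plan is to decouple the system into a fixed-point problem for $\u$ alone, solving the Vlasov equation for $f$ by the characteristic method and feeding the velocity moments $\rho,j$ back into a forced Stokes evolution for $\u$ handled through its analytic semigroup. Fix $R>0$ (to be chosen) and work in the complete metric space
$$X_{T_0}=\{\v:\ \v\in C(0,T_0;\mathbb{P}W^{1,2}(\R^2))\cap L^2(0,T_0;\mathbb{P}W^{2,2}(\R^2)),\ \|\v\|_{X_{T_0}}\le R\}.$$
For a given $\u\in X_{T_0}$, I would first solve the linear transport equation $\partial_t f+\v\cdot\nabla_x f+(\u-\v)\cdot\nabla_{\v}f=2f$ (the $2f$ coming from $\Dv_{\v}(\u-\v)=-2$) along the characteristics $\dot X=V,\ \dot V=\u(t,X)-V$, which are well posed since $\u(t,\cdot)$ is bounded and continuous in $x$. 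The representation $f(t,X(t),V(t))=e^{2t}f_0(X(0),V(0))$ immediately yields $f\ge0$, conservation of mass, and $\|f(t)\|_{L^\infty}\le e^{2t}\|f_0\|_{L^\infty}$, so $f\in L^\infty(0,T_0;L^\infty\cap L^1)$. I then define $\T\u=w$ to be the unique solution of
$$\partial_t w+Aw=\mathbb{P}\big(-\u\cdot\nabla\u-\rho\u+j\big),\qquad w|_{t=0}=\u_0,$$
where $A=-\mathbb{P}\D$ is the Stokes operator and $\rho=m_0f,\ j=m_1f$; a fixed point of $\T$ is the desired solution.

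Second, I would make quantitative how $f$ and its moments depend on $\u$. Using Lemma \ref{L1} together with the two-dimensional moment interpolation of \cite{BDGM}, I control $\|\rho\|_{L^4}$ and $\|j\|_{L^2}$ by $M_6f$, and I bound $M_6f$ by $C(T_0,R,M_6f_0)$ through the a priori estimates of Section 2, since $\u\in X_{T_0}$ embeds into $L^\infty(0,T_0;L^8)$ via $W^{1,2}(\R^2)\hookrightarrow L^8$, exactly the norm required by Lemma \ref{L1} with $k=6$. The more delicate point is Lipschitz dependence: for $\u_1,\u_2\in X_{T_0}$ the two characteristic flows differ by $O(\|\u_1-\u_2\|)$ on $[0,T_0]$ by a Grönwall estimate on the flow ODE, using $W^{2,2}(\R^2)\hookrightarrow L^\infty$ to control the field; this transfers to $f_1-f_2$ and, after integrating in $\v$ against the weight bounds furnished by $M_6f$, to an estimate of the form $\|\rho_1-\rho_2\|_{L^4}+\|j_1-j_2\|_{L^2}\le C(R)\,T_0^{\theta}\,\|\u_1-\u_2\|_{X_{T_0}}$ with $\theta>0$.

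Third, I would close the fixed-point argument using the maximal $L^2$-regularity of the self-adjoint Stokes operator: for forcing $\mathbb{P}N\in L^2(0,T_0;L^2)$ and $\u_0\in\mathbb{P}W^{1,2}$, the solution $w$ satisfies $w\in L^2(0,T_0;\mathbb{P}W^{2,2})$, $w_t\in L^2(0,T_0;L^2)$, with $\|w\|_{L^2(W^{2,2})}+\|w_t\|_{L^2(L^2)}\le C(\|\u_0\|_{W^{1,2}}+\|N\|_{L^2(L^2)})$, and then $w\in C(0,T_0;\mathbb{P}W^{1,2})$ by Lemma \ref{L5.1} with $V=W^{2,2},\ H=W^{1,2},\ V'=L^2$. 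The forcing carries smallness in $T_0$: by Ladyzhenskaya's inequality $\|\u\cdot\nabla\u\|_{L^2(L^2)}\le C\,T_0^{1/4}\|\u\|_{X_{T_0}}^2$ (borrowing the $L^2(W^{2,2})$ part), while $\|\rho\u\|_{L^2(L^2)}+\|j\|_{L^2(L^2)}\le C(R)\,T_0^{1/2}$ by Step 2. Hence, fixing $R=2C\|\u_0\|_{W^{1,2}}$ and taking $T_0$ small, $\T$ maps $X_{T_0}$ into itself and, using the Lipschitz moment estimates, is a contraction; Banach's theorem gives a unique $\u$, and $f$ is then uniquely determined. For the final assertion, if $f_0\in C^1$ I would differentiate the characteristic representation (the flow map being $C^1$ in its initial data once the spatial regularity of $\u$ is invoked) and propagate $\nabla_{x,\v}f$ in $W^{1,2}$ by Grönwall on the differentiated Vlasov equation; continuity of $\partial_t f=-\v\cdot\nabla_x f-(\u-\v)\cdot\nabla_{\v}f+2f$ then gives $f\in C^1(0,T_0;W^{1,2}(\R^2\times\R^2))$.

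The hard part will be Step 2 — controlling $\rho$ and $j$, and especially their Lipschitz dependence on $\u$ — because these are moments of $f$ over the unbounded velocity variable and are \emph{not} bounded by the $L^1\cap L^\infty$ norm of $f$ alone; one must route the estimate through the higher moment $M_6f$, whose propagation is itself coupled to $\u$ via Lemma \ref{L1}. Making this coupling quantitative and uniform over the ball $X_{T_0}$, while extracting a favorable power $T_0^{\theta}$ to power the contraction, is the technical heart of the argument, and it is exactly the place where the whole-space loss of compactness of $\int_{\R^2}f\,d\v$ and $\int_{\R^2}\v f\,d\v$ forces the use of Lemma \ref{L1} rather than a direct compactness argument.
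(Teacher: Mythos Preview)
Your proposal is correct and follows essentially the same route as the paper: both solve the Vlasov equation by characteristics (writing $f(t,\chi(t,x,\v))=e^{2t}f_0(x,\v)$), both treat the fluid equation through the heat/Stokes semigroup via the Duhamel formula, both control $\|\rho\|_{L^4}$ and $\|j\|_{L^2}$ through $M_6f$ using Lemma~\ref{L1} together with the moment interpolation of \cite{BDGM}, and both obtain the Lipschitz dependence of $f$ on $\u$ by a Gr\"onwall estimate on the difference of characteristic flows. The only organizational difference is that the paper iterates on the pair $(\u,f)$ in the product space $X\times Y$, whereas you eliminate $f=f(\u)$ first and set up the contraction on $\u$ alone; this is cosmetic. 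Your identification of the Lipschitz moment estimate $\|\rho_1-\rho_2\|_{L^4}+\|j_1-j_2\|_{L^2}$ as the delicate step is apt---the paper itself passes over this point rather quickly, deducing convergence of the iterates from $\|f_1-f_2\|_{Y}\le C\varepsilon\|\u_1-\u_2\|_X$ without spelling out how the moment differences (which are not controlled by the $L^1\cap L^\infty$ norm of $f_1-f_2$ alone) inherit the contraction.
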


\begin{proof}
We define $\|(\u,f)\|_{B}=\|\u\|_{X}+\|f\|_{Y},$
where $$X=L^{\infty}(0,T_0,\mathbb{P}W^{1,2}(\R^2))\cap L^{2}(0,T_0,\mathbb{P}W^{2,2}(\R^2)),$$
$$ \|\u\|_{X}=\|\u\|_{L^{\infty}(0,T_0, \mathbb{P}W^{1,2}(\R^2))}+\|\u\|_{ L^{2}(0,T_0, \mathbb{P}W^{2,2}(\R^2))};$$
and
$$Y= L^{\infty}(0,T_0, L^{\infty}(\R^2\times\R^2)\cap L^{1}(\R^2\times\R^2)),$$
$$\|f\|_{Y}=\|f\|_{L^{\infty}(0,T_0, L^{\infty}(\R^2\times\R^2)\cap L^{1}(\R^2\times\R^2))}.$$
Clearly the spaces $\,X, \, Y$ are Banach spaces, and thus $B$ is Banach space.

We let $U=(\u,f)$ in the Banach space $B$, define the operator $T(U)$ in $B$, as $T(U)=(\bar{\u},\,\bar{f}),$
where $\bar{\u},\,\bar{f}$ are given by
\begin{equation}
\label{semigroup equation}
\begin{split}
&\bar{\u}=e^{t\Delta}\u_0+\int_0^{t}e^{(t-s)\Delta}\mathbb{P}(\u\cdot\nabla\u)ds+\int_{0}^te^{(t-s)\Delta}\mathbb{P}(-\rho\u+j)ds,
\\&\bar{f}=N(\u,\v) \;\;\text{ where }\;\partial_t\bar{f}+\v\cdot\nabla\bar{f}+\Dv_{\v}((\u-\v)\bar{f})=0,\;\;\bar{f}(x,\v,0)=f_0(x,\v).
\end{split}
\end{equation}
We denote $$Q(\u,\w)=\int_0^te^{(t-s)\Delta}\mathbb{P}(\u\cdot\nabla\w),$$ which solves
\begin{equation*}
\partial_t Q-\Delta Q=\mathbb{P}(\u\cdot\nabla\w),\;\;\; Q(x,0)=0.
\end{equation*}
It is easy to obtain the following energy inequality,
\begin{equation*}
\frac{d}{dt}\|\Delta Q\|_{L^2}^2+\|\nabla\Delta Q\|_{L^2}^2\leq \|\nabla(\u\cdot\nabla\w)\|_{L^2}\|\nabla\Delta Q\|_{L^2}.
\end{equation*}
 Using Ladyzhenskaya inequality for the term involving $\nabla\u\cdot\nabla \w$ and the interpolation inequality for the term involving $\u\cdot\nabla(\nabla\w)$, one obtains that
\begin{equation*}
\sup_{0\leq t\leq T}\|\Delta Q\|_{L^2}^2+\int_0^{T_0}\|\nabla\Delta Q\|_{L^2}^2\,dt\leq CT_0\|\u\|_X^2\|\w\|_X^2.
\end{equation*}

We denote $$L:=\int_0^te^{(t-s)\Delta}\mathbb{P}(-\rho\u+j)ds,$$ which solves
\begin{equation*}
\partial_tL-\Delta L=\mathbb{P}(-\rho\u+j),\;\;\; L(x,0)=0.
\end{equation*}
Multiplying $\Delta L$ the both sides of above equation, and using integration by parts, we get
\begin{equation}
\label{secondenergy}
\begin{split}
\sup_{0\leq t\leq T}\|\nabla L\|_{L^2}^2+2\int_0^{T_0}\|\Delta L\|_{L^2}^2dt&\leq \int_0^{T_0}\|\Delta L\|_{L^2}^2ds\\&+\int_0^{T_0}\|\rho\u\|_{L^2}^2\,ds+\int_0^{T_0}\|j\|_{L^2}^2\,ds.
\end{split}
\end{equation}
By the Cauchy-Schwartz inequality, we have
\begin{equation*}
\sup_{0\leq t\leq T}\|\nabla L\|_{L^2}^2+\int_0^{T_0}\|\Delta L\|_{L^2}^2dt\leq \int_0^{T_0}\left(\|\rho\|_{L^4}^2+\|\u\|_{L^4}^2+\|j\|_{L^2}^2\right)\,ds.
\end{equation*}
Using Lemma 1 as in \cite{BDGM} and Lemma \ref{L1}, we can control $\|\rho\|_{L^4}$ as follows
\begin{equation*}
\|\rho\|_{L^4}\leq M_6f\leq C(M_6f_0+\|\u\|_X)^8.
\end{equation*}
Similarly, we can control the term $\|j\|_{L^2}$. Thus, we have the following estimate:
\begin{equation*}
\sup_{0\leq t\leq T}\|\nabla L\|_{L^2}^2+\int_0^{T_0}\|\Delta L\|_{L^2}^2dt\leq C(1+\|\u\|_X)^8.
\end{equation*}
Integrating the third equation in \eqref{NSV} with respect to $x$ and $\v$, we have
\begin{equation}
\label{conservation}
\frac{d}{dt}\int_{\R^2}\int_{\R^2}f dx\,d\v=0.
\end{equation}
Applying the maximum principle to the third equation in \eqref{NSV}, one obtains that
\begin{equation}
\label{maximum}
\|f\|_{L^{\infty}(\R^2\times\R^2)}\leq C_T\|f_0\|_{L^{\infty}(\R^2\times\R^2)}
\end{equation}
for all $t\in [0,T],$ if $f_0\in L^{\infty}(\R^2\times\R^2).$  From \eqref{conservation} and \eqref{maximum}, $f=N(\u,\v)$ satisfies the following estimate
\begin{equation}
\label{theboundoff}
\|f\|_{L^{\infty}(0,T,L^{\infty}(\R^2\times\R^2)\cap L^1(\R^2\times\R^2))}\leq C_{T_0}\|f_0\|_{L^{\infty}(\R^2\times\R^2)},
\end{equation}
which means
\begin{equation}
\label{theboundofN}
\|N(\u,\v)\|_{L^{\infty}(0,T_0,L^{\infty}(\R^2\times\R^2)\cap L^{1}(\R^2\times\R^2))}\leq C_{T_0}\|f_0\|_{L^{\infty}(\R^2\times\R^2)}.
\end{equation}
Following the same argument of \cite{BDGM}, we let $g=e^{-2t}f$ satisfy the following transport equation
\begin{equation*}
\partial_t g+\v\cdot\nabla g+(\u-\v)\cdot \nabla_{\v}g=0.
\end{equation*}
The above equation can be written by characteristics method as follows,
\begin{equation*}
\begin{split}
&\frac{dx}{dt}=\v(t),
\\&\frac{d\v}{dt}=\u(t,x(t))-\v(t),
\end{split}
\end{equation*}
with the initial data
$$x(0)=x\;\;\;\text{ and }\,\,\,\,\v(0)=\v,$$
and set $\chi(t,x,\v)=(x(t),\v(t))$ for any $(t,x,\v)$.

 Applying the fact $\u\in C(0,T;W^{1,2}(\R^2))$ and the classical theory of Ordinary Differential Equations, we obtain the unique solution
\begin{equation}
\label{solutionofode}
f(t,x,\v)=e^{2t}f_0(\chi(t,x,\v)),\;\;\;\text{ for any}\,\, (t,x,\v).
\end{equation}
Thus, we conclude that $$f\in C^1(0,T;C^{0}(\R^2\times\R^2))\;\;\;\text{ if }\;\;f_0\in C^{1}(\R^2\times\R^2).$$
Define $$f_1=N(\u_1,\v),\; f_2=N(\u_2,\v),$$ and $$\chi_1=(x(\u_1),\v),\;\chi_2=(x(\u_2),\v).$$

Notice that $f_1-f_2$ can be controlled as follows
\begin{equation}
\label{estimateoff}
\|f_1-f_2\|_Y\leq C(T)\|\chi_1-\chi_2\|_{L^{\infty}(\R^2\times\R^2)}.
\end{equation}
By the definition of $\chi=(x,\v)$, we have the following estimate
\begin{equation}
\begin{split}
\label{continuousdependent}
&\|(\chi_1-\chi_2)(t)\|_{L^{\infty}(\R^2\times\R^2)} \\&\leq C\left(\int_0^t\|(\u_1-\u_2)(s)\|_{L^{\infty}(\R^2)}ds+\int_0^t(1+\|\u(s)\|_X)\|(\chi_1-\chi_2)(s)\|_{L^{\infty}(\R^2\times\R^2)}ds\right).
\\&\leq C\left(\int_0^t\|(\u_1-\u_2)(s)\|_{L^{\infty}(\R^2)}ds+\int_0^t\|(\chi_1-\chi_2)(s)\|_{L^{\infty}(\R^2\times\R^2)}ds\right).
\end{split}
\end{equation}
Thus, for any $t$, applying Gronwall's inequality to \eqref{continuousdependent}, we have
\begin{equation*}
\|\chi_1-\chi_2\|_Y\leq C\varepsilon \|\u_1-\u_2\|_{X}.
\end{equation*}
This, with the help of \eqref{estimateoff}, implies that
\begin{equation*}
\|f_1-f_2\|_Y\leq C\varepsilon \|\u_1-\u_2\|_X.
\end{equation*}
Thus
\begin{equation}
\label{estimateoff2}
\|N(\u_1,\v)-N(\u_2,\v)\|_Y\leq C\varepsilon \|\u_1-\u_2\|_X.
\end{equation}

Set $U^{0}=(\u_0,f_0)$ and define the iteration $U^{n+1}=T(U^n)\,\text{ for }\,n=0,1,2,...$. It is easy to see that the sequence $U^n$ is bounded in $B$ and converges if we choose $\varepsilon$ small enough.
If there exist $A,\,D$ and $\varepsilon$ such that
\begin{equation*}
\|\u^n\|_X\leq A,\;\;\;\|f^n\|_{Y}\leq D,
\end{equation*}
then, by induction and \eqref{semigroup equation}, we have
\begin{equation*}
\|\u^{n+1}\|_X\leq A_0+\varepsilon A^2+\varepsilon C(1+A)^8,\;\;\|f^{n+1}\|_Y\leq C_{T_0}D_0.
\end{equation*}
We can choose $\varepsilon$ small enough, such that $\varepsilon(1+A)^8+\varepsilon A^2+A_0\leq A$,
and choose $D$ such that $C_{T_0}D_0\leq D.$
Thus, we conclude that the sequence is bounded in $B$, then we can obtain the convergence of $\u^n$ in $X$, $f^n$ in $Y$.

\end{proof}

By Proposition \ref{P2}, there exists a strong solution on a short time interval $[0,T_0].$ For any given $T>0,$ we consider the maximal interval of the existence, $T_1=\sup T_0\leq T,$ such that the solution is strong on $[0,T_0].$ The main goal is to prove that $T_1$ can be taken to be equal to $+\infty.$
For any given $T_0>0,$ there exists a constant $K>0$ such that
\begin{equation*}
\|f\|_Y\leq \frac{K}{C_{T_0}},
\end{equation*}
which implies $\|f(T_0,x,\v)\|_Y\leq K.$
Using a priori estimates in Section 2, and applying Proposition \ref{P2}, the strong solution can be extended to $[0,T_0+T^*]$ for a small number $T^{*}>0$. One can then repeat the argument many times and obtain the existence and uniqueness on the whole real line. Thus we proved Theorem \ref{T}.
\\\\

To prove Theorem \ref{T2}, the further regularity $(\u,f)$ can be deduced from the regularity from Theorem \ref{T}. We can differentiate the equation \eqref{NSV} and apply similar arguments, Theorem \ref{T2} follows.

\bigskip\bigskip

\end{document}